\documentclass[12pt]{amsart}

\usepackage[T1]{fontenc}
\usepackage[utf8]{inputenc}
\usepackage{lmodern}
\usepackage[margin=1in]{geometry}
\usepackage{microtype}
\usepackage{amsmath,amssymb,amsthm,mathtools,mathrsfs,eucal}
\usepackage{xcolor}
\usepackage{tikz}
\usetikzlibrary{calc,arrows.meta,positioning}
\usepackage{hyperref}

\hypersetup{
  colorlinks=true,
  linkcolor=blue!50!black,
  citecolor=blue!50!black,
  urlcolor=blue!50!black
}

\newtheorem{theorem}{Theorem}
\newtheorem{lemma}[theorem]{Lemma}
\newtheorem{corollary}[theorem]{Corollary}

\theoremstyle{remark}
\newtheorem{remark}[theorem]{Remark}

\newcommand{\R}{\mathbb{R}}

\DeclareMathOperator{\area}{area}
\DeclareMathOperator{\conv}{conv}
\DeclareMathOperator{\diam}{diam}

\newcommand{\FillRemovedTriangle}[3]{%
  \fill[white]
    ({#1+#3/4},{#2+sqrt(3)*#3/4})
    -- ({#1+3*#3/4},{#2+sqrt(3)*#3/4})
    -- ({#1+#3/2},{#2})
    -- cycle;%
}

\title{How Thick Is the Sierpi\'nski Triangle?}

\author[Scott Duke Kominers]{Scott Duke Kominers}
\address{Harvard Business School; Department of Economics and Center of Mathematical Sciences and Applications, Harvard University; and a16z crypto}
\email{kominers@fas.harvard.edu}

\thanks{I used LLMs to assist with computations and proof-checking in the preparation of this article, particularly GPT-5.$\{\text{4},\text{5}\}$ Pro, and Claude 4.6 Opus (accessed in part via Poe with the support of Quora, where I am an advisor). I particularly appreciate a thorough review from Refine.ink. The problem, methods, and eventual written form are my own; and of course any errors remain my responsibility. This work was conducted while I was visiting the Technological Innovation, Entrepreneurship, and Strategic Management (TIES) Group at the MIT Sloan School of Management; I greatly appreciate their hospitality.}

\begin{document}

\begin{abstract}
Although the Sierpi\'nski triangle has planar area \(0\), it is uniformly non-flat: at every point and every scale, its nearby points span a two-dimensional region of comparable size.
We prove a sharp version of this statement, showing that the Feng--Wu thickness of \(E\) is exactly \(\sqrt{3}/6\), the inradius of a unit equilateral triangle.
More precisely, if \(E\) is the standard Sierpi\'nski triangle of side length \(1\) and \(B(x,r)\) denotes the closed disk of radius \(r\) centered at \(x\), then for every \(x\in E\) and every \(0<r\le 1\), the convex hull of \(E\cap B(x,r)\) contains an equilateral triangle of side length \(r\).
Consequently, \(\conv(E\cap B(x,r))\) contains a closed disk of radius \((\sqrt{3}/6)r\); this constant is best possible.
The proof is elementary---boundary edges of all construction triangles survive in the limit set, and self-similarity reduces the problem to the normalized range \(1/2\le r\le 1\).
\end{abstract}

\maketitle

\section{Introduction}

The Sierpi\'nski triangle is one of the first fractals many of us meet: start with an equilateral triangle, remove the open middle triangle, then repeat the same removal process inside each of the three corner triangles \textit{ad infinitum} \cite{Sierpinski15,Sierpinski16}.
The resulting set has area \(0\)---in a certain sense, it takes up no ``space''---but it still looks strikingly two-dimensional.
(Figure~\ref{fig:sierpinski-finite-stage} shows a finite-stage approximation to the limit set.)
No matter where one zooms in, nearby points do not collapse onto a line; they always spread out in more than one direction.

\begin{figure}[t]
\centering
\begin{tikzpicture}[scale=4.6]
  \coordinate (A) at (0,0);
  \coordinate (B) at (1,0);
  \coordinate (C) at (0.5,{sqrt(3)/2});

  \fill[blue!18] (A)--(B)--(C)--cycle;

  % Remove middle triangles through a fixed finite stage.
  % Each triple x/y/s gives the lower-left vertex (x,y) and side length s
  % of an upright construction triangle whose middle triangle is removed.

  \foreach \x/\y/\s in {
    0/0/1
  }{
    \FillRemovedTriangle{\x}{\y}{\s}
  }

  \foreach \x/\y/\s in {
    0/0/0.5,
    0.5/0/0.5,
    0.25/{sqrt(3)/4}/0.5
  }{
    \FillRemovedTriangle{\x}{\y}{\s}
  }

  \foreach \x/\y/\s in {
    0/0/0.25,
    0.25/0/0.25,
    0.125/{sqrt(3)/8}/0.25,
    0.5/0/0.25,
    0.75/0/0.25,
    0.625/{sqrt(3)/8}/0.25,
    0.25/{sqrt(3)/4}/0.25,
    0.5/{sqrt(3)/4}/0.25,
    0.375/{3*sqrt(3)/8}/0.25
  }{
    \FillRemovedTriangle{\x}{\y}{\s}
  }

  \foreach \x/\y/\s in {
    0/0/0.125,
    0.125/0/0.125,
    0.0625/{sqrt(3)/16}/0.125,
    0.25/0/0.125,
    0.375/0/0.125,
    0.3125/{sqrt(3)/16}/0.125,
    0.125/{sqrt(3)/8}/0.125,
    0.25/{sqrt(3)/8}/0.125,
    0.1875/{3*sqrt(3)/16}/0.125,
    0.5/0/0.125,
    0.625/0/0.125,
    0.5625/{sqrt(3)/16}/0.125,
    0.75/0/0.125,
    0.875/0/0.125,
    0.8125/{sqrt(3)/16}/0.125,
    0.625/{sqrt(3)/8}/0.125,
    0.75/{sqrt(3)/8}/0.125,
    0.6875/{3*sqrt(3)/16}/0.125,
    0.25/{sqrt(3)/4}/0.125,
    0.375/{sqrt(3)/4}/0.125,
    0.3125/{5*sqrt(3)/16}/0.125,
    0.5/{sqrt(3)/4}/0.125,
    0.625/{sqrt(3)/4}/0.125,
    0.5625/{5*sqrt(3)/16}/0.125,
    0.375/{3*sqrt(3)/8}/0.125,
    0.5/{3*sqrt(3)/8}/0.125,
    0.4375/{7*sqrt(3)/16}/0.125
  }{
    \FillRemovedTriangle{\x}{\y}{\s}
  }

  \draw[black, line width=0.45pt] (A)--(B)--(C)--cycle;
\end{tikzpicture}
\caption{A finite-stage approximation to the standard Sierpi\'nski triangle. The limit set is obtained by repeating the corner-triangle construction indefinitely.}
\label{fig:sierpinski-finite-stage}
\end{figure}

This note makes that visual intuition precise using the Feng--Wu notion of thickness \cite{FengWu}, which, perhaps a bit paradoxically, distinguishes different ways a set can be thin.
More broadly, we illustrate a useful distinction: a measure-zero set may still occupy surrounding space in a quantitatively substantial geometric way.
A line segment in the plane has area \(0\) because it is genuinely one-dimensional.
The Sierpi\'nski triangle also has area \(0\), but for a very different reason: it branches in multiple directions at every scale while leaving infinitely many holes.
So the statement ``this set has measure zero'' says little by itself about how much of the ambient plane the set fills locally.

A natural way to probe that local geometry for a fractal \(F\) is to fix a point \(x\) and a radius \(r>0\), and look at how much of \(F\) is contained in the ball of radius \(r\) centered at \(x\).
The intersection with the ball may have many holes, but its convex hull records how broadly those intersection points are spread.
If this local convex hull always contains a Euclidean ball whose radius is a definite fraction of \(r\), then there is a sense in which the set \(F\) is uniformly far from being flat: its local convex hulls remain quantitatively two-dimensional at every location and scale.

Feng--Wu thickness \cite{FengWu} packages the idea just described into a numerical invariant.
This invariant was originally introduced to study when arithmetic sums of measure-zero sets become large enough to acquire interior; thus, Feng--Wu thickness measures both a kind of local geometric non-flatness and a kind of additive largeness.

In the case of the Sierpi\'nski triangle, the invariant can be computed exactly.
We show that at every point and every scale, the local convex hull contains an equilateral triangle of the corresponding side length, and hence a disk of the corresponding incircle radius.
Thus the Feng--Wu thickness of the Sierpi\'nski triangle is exactly the inradius of a unit equilateral triangle.

\subsection{The Sierpi\'nski triangle}

We let
\[
v_1=(0,0), \qquad
v_2=(1,0), \qquad
v_3=\Bigl(\frac{1}{2},\frac{\sqrt{3}}{2}\Bigr),
\]
and let
\[
\Delta=\conv\{v_1,v_2,v_3\}.
\]
For \(i=1,2,3\), define
\[
\phi_i(x)=\frac{x+v_i}{2}.
\]
The \emph{standard Sierpi\'nski triangle} is the unique nonempty compact set \(E\subset\R^2\) satisfying
\[
E=\phi_1(E)\cup\phi_2(E)\cup\phi_3(E).
\]
This is the classical self-similar attractor associated to the three corner maps; see Hutchinson \cite{Hutchinson} and Falconer \cite{Falconer} for general background.
We write
\[
T_i=\phi_i(\Delta)\qquad (i=1,2,3)
\]
for the three first-level corner triangles.
Figure~\ref{fig:firststep} recalls the first stage of the construction.

We also make use of the equivalent stage construction. Set \(K_0=\Delta\) and
\[
K_{m+1}=\phi_1(K_m)\cup\phi_2(K_m)\cup\phi_3(K_m)
\qquad (m\ge 0).
\]
Then we have
\[
E=\bigcap_{m\ge 0} K_m.
\]
In particular, \(E\subset\Delta\). Moreover, \(v_1,v_2,v_3\in E\), since each vertex remains in every stage \(K_m\).
This construction also makes the zero-area statement visible: \(K_m\) is a union of \(3^m\) equilateral triangles of side length \(2^{-m}\), so
\[
\area(K_m)
=
\left(\frac{3}{4}\right)^m \area(\Delta)
\longrightarrow 0.
\]
Thus \(E\) has area \(0\).

\begin{figure}[t]
\centering
\begin{tikzpicture}[scale=5.2, every node/.style={font=\small}]
  \coordinate (v1) at (0,0);
  \coordinate (v2) at (1,0);
  \coordinate (v3) at (0.5,{sqrt(3)/2});
  \coordinate (m12) at ($(v1)!0.5!(v2)$);
  \coordinate (m23) at ($(v2)!0.5!(v3)$);
  \coordinate (m13) at ($(v1)!0.5!(v3)$);

  \fill[blue!12] (v1)--(m12)--(m13)--cycle;
  \fill[blue!12] (v2)--(m23)--(m12)--cycle;
  \fill[blue!12] (v3)--(m13)--(m23)--cycle;

  \draw[thick] (v1)--(v2)--(v3)--cycle;
  \draw[thick] (m12)--(m23)--(m13)--cycle;

  \fill (v1) circle (0.45pt) node[below left] {$v_1$};
  \fill (v2) circle (0.45pt) node[below right] {$v_2$};
  \fill (v3) circle (0.45pt) node[above] {$v_3$};

  \node[blue!60!black] at (0.20,0.12) {$T_1$};
  \node[blue!60!black] at (0.80,0.12) {$T_2$};
  \node[blue!60!black] at (0.50,0.60) {$T_3$};
\end{tikzpicture}
\caption{First step of the construction and the cells \(T_1,T_2,T_3\).}
\label{fig:firststep}
\end{figure}

\subsection{Thickness of compact sets in \(\R^2\)}

Throughout the paper, \(B(x,r)\) denotes the closed Euclidean disk of radius \(r\) centered at \(x\).

Following Feng and Wu \cite{FengWu}, we define the \emph{thickness} of a compact set \(F\subset\R^2\) by
\[
\tau(F)=
\sup\Bigl\{
  c\in[0,1]:
  \begin{array}{l}
  \forall x\in F,\ \forall\,0<r\le \diam(F),\\[2pt]
  \exists y\in\R^2 \text{ with } B(y,cr)\subset \conv(F\cap B(x,r))
  \end{array}
\Bigr\}.
\]
This definition is related in spirit, but not in its mechanics, to the classical Newhouse \cite{Newhouse1970,Newhouse1979} concept of thickness for subsets of the line; see~\cite{Yavicoli} for a survey.
Feng--Wu thickness measures the size of balls contained in local convex hulls, rather than ratios of complementary gaps.
It is also distinct from other higher-dimensional notions of thickness used in intersection problems; see, for instance,~\cite{FalconerYavicoli}.

\subsection{Main result}

We compute the Feng--Wu thickness of the Sierpi\'nski triangle exactly.

\begin{theorem}\label{thm:main}
Let \(E\) be the standard Sierpi\'nski triangle of side length \(1\).
Then
\[
\tau(E)=\frac{\sqrt{3}}{6}.
\]
More concretely, for every \(x\in E\) and every \(r\) with \(0<r\le 1\), the convex hull of \(E\cap B(x,r)\) contains an equilateral triangle of side length \(r\), and hence contains a closed disk of radius \(\sqrt{3}\,r/6\); and moreover the constant \(\sqrt{3}/6\) is optimal.
\end{theorem}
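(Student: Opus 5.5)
The plan is to reduce the lower bound to a normalized range of scales using self-similarity, settle that range by an explicit geometric argument, and then prove optimality by exhibiting a single configuration where the constant is attained.

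\textbf{Boundary survival.} I will first record that every construction triangle survives along its boundary. If \(\psi=\phi_{i_1}\circ\cdots\circ\phi_{i_m}\), then the three edges of \(\psi(\Delta)\) lie in \(E\). It suffices to check \(\partial\Delta\subset E\). Each edge of \(\Delta\) is covered by the corresponding edges of two of the \(T_i\), so \(\partial\Delta\subset K_m\) for every \(m\) by induction, and hence \(\partial\Delta\subset E\). Consequently, if a cell \(\psi(\Delta)\) of side \(2^{-m}\) has its three vertices in \(B(x,r)\), then \(\conv(E\cap B(x,r))\) contains \(\psi(\Delta)\), which is an equilateral triangle of side \(2^{-m}\). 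More generally, the convex hull contains the convex hull of whatever boundary segments of cells lie inside the disk.

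\textbf{Reduction to \(1/2\le r\le 1\).} Given \(x\in E\) and \(r\le 1/2\), choose \(m\ge1\) with \(2^{-m-1}< r\le 2^{-m}\), and a level-\(m\) cell \(\psi(\Delta)\) containing \(x\). Since \(\psi(E)\subset E\), we have \(E\cap B(x,r)\supset \psi\bigl(E\cap B(\psi^{-1}x,2^m r)\bigr)\) with \(2^m r\in(1/2,1]\). The similarity \(\psi\) scales equilateral triangles by \(2^{-m}\), so the claim for \(r\) follows from the claim at the normalized scale.

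\textbf{The normalized case.} This is the main obstacle. Take \(x\in E\) and \(1/2\le r\le 1\), and assume by symmetry that \(x\in T_1\). If \(x\) is close enough to \(v_1\), the disk contains the whole corner cell \(T_1\), but that cell has side only \(1/2\), which falls short when \(r>1/2\). So I will instead use boundary segments. The disk \(B(x,r)\) meets the two edges of \(\Delta\) emanating from \(v_1\), the bottom edge \([v_1,v_2]\) and the left edge \([v_1,v_3]\), and it also meets the middle-triangle edges, all of which lie in \(E\). My plan is to show that the convex hull of \(\partial\Delta\cap B(x,r)\) together with the internal edges of the \(T_i\) inside the disk contains an equilateral triangle of side \(r\), and preferably a translate of an upright one. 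First I would treat the case \(x=v_1\). There the chords along the two edges have length \(r\), so the triangle with vertices \(v_1\), \(v_1+r(1,0)\), \(v_1+r(1/2,\sqrt3/2)\) works directly. For general \(x\in T_1\), I would parametrize how far \(x\) lies from the vertex and use the portion of the bottom edge within distance \(r\) of \(x\), which has length at least \(r\) because \(x\) sits at height at most \(\sqrt3/4\)... or, when \(x\) is higher up, use the left edge instead. I would then find an apex on another surviving segment inside the disk. I expect a case split by which of the three edges of \(T_1\) lies nearest to \(x\), with explicit distance inequalities in each case. If this direct approach breaks down, the fallback is to combine two adjacent half-size cells contained in the disk, whose convex hull is a trapezoid containing a side-\(r\) equilateral triangle.

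\textbf{Conclusion and optimality.} The incircle of an equilateral triangle of side \(r\) has radius \(\sqrt3 r/6\), which gives \(\tau(E)\ge\sqrt3/6\). For the upper bound, take \(x=v_1\) and \(r=1\). Then \(E\cap B(v_1,1)=E\), whose convex hull is \(\Delta\), and the largest disk inside \(\Delta\) has radius \(\sqrt3/6\). Hence no larger constant works, and \(\tau(E)=\sqrt3/6\).
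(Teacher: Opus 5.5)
\paragraph{The gap.} The normalized case \(1/2\le r\le 1\) is left unproved, and you rightly flag it as the heart of the proof. Your primary plan says what you hope to verify but gives no argument: it has a chord of \([v_1,v_2]\) of length at least \(r\), an apex ``on another surviving segment,'' and a case split whose distance inequalities are never written down. Your fallback cannot work at all. Two adjacent cells of side \(1/2\) lie between a side of \(\Delta\) and the parallel midline, a strip of width \(\sqrt3/4\). An equilateral triangle of side \(r\) has width at least \(\sqrt3\,r/2>\sqrt3/4\) in every direction when \(r>1/2\). So the trapezoid spanned by those two cells never contains a side-\(r\) equilateral triangle for \(r>1/2\). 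As written, the lower bound \(\tau(E)\ge\sqrt3/6\) rests on an unproved step.

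\paragraph{The missing idea.} The triangle you already wrote down for \(x=v_1\) works for every \(x\in T_1\), with no case split. Set \(Q=\conv\{v_1,\ v_1+r(v_2-v_1),\ v_1+r(v_3-v_1)\}\).
\begin{itemize}
\item Because \(r\ge 1/2\), the midpoints of \([v_1,v_2]\) and \([v_1,v_3]\) lie on the sides of \(Q\) through \(v_1\). Hence \(T_1\subset Q\), and so \(x\in Q\).
\item Because \(r\le 1\), the vertices of \(Q\) lie on \(\partial\Delta\subset E\).
\item Because \(Q\) is equilateral of side \(r\), its diameter is \(r\). So each vertex of \(Q\) is within distance \(r\) of \(x\).
\end{itemize}
Thus the three vertices lie in \(E\cap B(x,r)\), and \(Q\subset\conv(E\cap B(x,r))\). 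This is exactly the paper's argument.

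The rest of your outline agrees with the paper: the rescaling, the incircle bound, and optimality at \(x=v_1\), \(r=1\). For boundary survival you use \(\partial\Delta\subset\bigcup_i\phi_i(\partial\Delta)\) and the stage sets \(K_m\), where the paper uses dyadic density; that argument is also valid.

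One small repair is needed in the rescaling. Choose \(\psi\) with \(x\in\psi(E)\), not merely \(x\in\psi(\Delta)\), so that \(\psi^{-1}x\in E\) and the normalized case applies. This is possible because \(E=\bigcup_{|w|=m}\phi_w(E)\).
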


The proof comes down to two observations: every boundary edge of every construction triangle survives in the limit set, and once we understand the normalized range \(1/2\le r\le 1\), self-similarity handles all smaller scales.

\section{Boundary edges survive}

For a word \(w=i_1i_2\cdots i_n\) over \(\{1,2,3\}\), write \(|w|=n\).
We allow the empty word \(\varnothing\), with \(|\varnothing|=0\), and set
\[
\phi_{\varnothing}=\operatorname{id},
\qquad
\Delta_{\varnothing}=\Delta.
\]
For \(n\ge 1\), write\footnote{Under our convention, the map on the far right is applied first; this choice is immaterial for the argument, but fixes the notation for the cells \(\Delta_w\).}
\[
\phi_w=\phi_{i_1}\circ\phi_{i_2}\circ\cdots\circ\phi_{i_n},
\qquad
\Delta_w=\phi_w(\Delta).
\]
Thus, when \(|w|=n\), \(\Delta_w\) is a level-\(n\) construction triangle, similar to \(\Delta\) with side length \(2^{-n}\).

The key geometric fact used in our thickness computation is that the boundary of every such triangle is still present in the gasket.

\begin{lemma}\label{lem:boundary}
The three outer sides of \(\Delta\) are contained in \(E\).
In fact, for every word \(w\), the three sides of the construction triangle \(\Delta_w=\phi_w(\Delta)\) are contained in \(E\); in particular, every point on every such side belongs to \(E\).
\end{lemma}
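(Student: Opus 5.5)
We first prove that the boundary \(\partial\Delta\) lies in \(E\), and then transfer this to every cell \(\Delta_w\) using the self-similarity \(E=\phi_1(E)\cup\phi_2(E)\cup\phi_3(E)\).

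For the first step, we use the stage description \(E=\bigcap_{m\ge0}K_m\). It suffices to show \(\partial\Delta\subset K_m\) for every \(m\), which we prove by induction on \(m\). For \(m=0\) it is immediate, since \(K_0=\Delta\). For the inductive step, note that each side of \(\Delta\) is the union of two half-sides. For instance, \([v_1,v_2]=[v_1,\tfrac{v_1+v_2}{2}]\cup[\tfrac{v_1+v_2}{2},v_2]\), and these halves are \(\phi_1([v_1,v_2])\) and \(\phi_2([v_1,v_2])\). The same holds for the other two sides with the appropriate pair of maps. Hence
\[
\partial\Delta\subset\bigcup_{i=1}^3\phi_i(\partial\Delta)\subset\bigcup_{i=1}^3\phi_i(K_m)=K_{m+1},
\]
where the middle inclusion uses the inductive hypothesis \(\partial\Delta\subset K_m\). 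Intersecting over all \(m\) gives \(\partial\Delta\subset E\).

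An alternative argument avoids stages. Let \(S=\partial\Delta\). The computation above shows \(S\subset\bigcup_i\phi_i(S)\), and in fact the three outer half-edges of the three corner triangles together make up exactly \(S\). Then \(E\cup S\) is a nonempty compact set invariant under the Hutchinson operator, so by uniqueness \(E\cup S=E\), that is, \(S\subset E\). The induction argument is more self-contained, so we would present that one.

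For the second step, iterating \(E=\bigcup_i\phi_i(E)\) gives \(\phi_w(E)\subset E\) for every word \(w\), by induction on \(|w|\). Since \(\phi_w\) is a similarity, it maps the sides of \(\Delta\) onto the sides of \(\Delta_w\). Therefore
\[
\partial\Delta_w=\phi_w(\partial\Delta)\subset\phi_w(E)\subset E.
\]

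There is no serious obstacle in either step. The only point requiring care is the half-edge decomposition: we must check that \(\phi_i\) sends the side \([v_i,v_j]\) to the half-side from \(v_i\) to the midpoint of \([v_i,v_j]\), which is immediate from \(\phi_i(x)=(x+v_i)/2\).
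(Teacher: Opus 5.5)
Your main argument is correct, and your second step (using \(\phi_w(E)\subset E\) and \(\phi_w(\partial\Delta)=\partial\Delta_w\)) is exactly the paper's. The first step takes a different route.

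The paper argues from inside \(E\). Starting from \(v_1,v_2\in E\), it applies words in \(\phi_1,\phi_2\), which act on the side as \(t\mapsto t/2\) and \(t\mapsto(1+t)/2\). This places every dyadic point of \([v_1,v_2]\) in \(E\), and density plus closedness of \(E\) finish the job. You argue from outside. The identity \([v_1,v_2]=\phi_1([v_1,v_2])\cup\phi_2([v_1,v_2])\) and its analogues give \(\partial\Delta\subset\bigcup_i\phi_i(\partial\Delta)\). This propagates \(\partial\Delta\subset K_m\) through all stages, and you then intersect.

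Both proofs rest on the same self-similarity of the edges. Yours needs no density or limiting argument, but it relies on the stage characterization \(E=\bigcap_m K_m\). The paper's needs only that the vertices lie in \(E\) (they are fixed points of the \(\phi_i\)), that \(\phi_i(E)\subset E\), and that \(E\) is closed, so it runs directly off the attractor definition.

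Your stage-free alternative, however, does not work as written. The set \(\bigcup_i\phi_i(S)\) is the union of the full boundaries of \(T_1,T_2,T_3\), so it also contains the three edges of the removed middle triangle. Hence \(\bigcup_i\phi_i(E\cup S)=E\cup\bigcup_i\phi_i(S)\) equals \(E\cup S\) only if those inner edges already lie in \(E\), which is circular.

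The standard repair is sub-invariance. Write \(H(A)=\bigcup_i\phi_i(A)\). The inclusion \(S\subset H(S)\) gives \(S\subset H^n(S)\) for all \(n\), and \(H^n(S)\to E\) in the Hausdorff metric. So every point of \(S\) is at distance \(0\) from the closed set \(E\). Equivalently, \(\overline{\bigcup_w\phi_w(S)}\) is a genuinely invariant nonempty compact set, hence equals \(E\).

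Since you chose to present the induction, this does not affect your proof.
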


\begin{proof}
Consider first the side \([v_1,v_2]\).
The maps \(\phi_1\) and \(\phi_2\) preserve this side and, in the affine parameter
\[
(1-t)v_1+t v_2 \qquad (0\le t\le 1),
\]
act by
\[
t\mapsto \frac{t}{2}
\qquad\text{and}\qquad
t\mapsto \frac{1+t}{2},
\]
respectively.
Since \(v_1,v_2\in E\) and \(\phi_i(E)\subset E\) for \(i=1,2,3\), applying words in \(\phi_1\) and \(\phi_2\) to the endpoints shows that, for every \(n\ge 0\), all dyadic subdivision points
\[
\left(1-\frac{k}{2^n}\right)v_1+\frac{k}{2^n}v_2
\qquad (k=0,1,\dots,2^n)
\]
belong to \(E\).
These points are dense in \([v_1,v_2]\), and \(E\) is closed.
Therefore \([v_1,v_2]\subset E\).

The same argument applied to the pairs \((\phi_1,\phi_3)\) and \((\phi_2,\phi_3)\) gives
\[
[v_1,v_3]\subset E
\qquad\text{and}\qquad
[v_2,v_3]\subset E.
\]
Thus we see that all three sides of \(\Delta\) are contained in \(E\).

Now let \(w\) be any word over \(\{1,2,3\}\).
Each side of \(\Delta_w=\phi_w(\Delta)\) is the image under \(\phi_w\) of one of the three sides of \(\Delta\).
Moreover, since \(\phi_i(E)\subset E\) for each \(i\), repeated composition gives
\[
\phi_w(E)\subset E.
\]
Applying \(\phi_w\) to any side of \(\Delta\) therefore gives a side of \(\Delta_w\) contained in \(E\).
Thus every side of \(\Delta_w\) lies in \(E\).
\end{proof}

Combining Lemma~\ref{lem:boundary} with the inclusion \(E\subset\Delta\) from the stage construction, we see that \(E\) contains \(v_1,v_2,v_3\) and no point outside \(\Delta\).
Hence
\[
\diam(E)=1
\qquad\text{and}\qquad
\conv(E)=\Delta.
\]

Lemma~\ref{lem:boundary} is the key simplification for what follows: although the construction removes area at every stage, it leaves enough straight boundary segments in \(E\) to build local equilateral triangles by taking convex hulls.

\section{A local triangle in the normalized range}

Recall that \(T_i=\phi_i(\Delta)\) for \(i=1,2,3\).
Each \(T_i\) is an equilateral triangle of side length \(1/2\).

For \(r\in[1/2,1]\), define three larger ``corner triangles:''
\begin{gather*}
Q_1(r)=\conv\!\bigl\{v_1,\ v_1+r(v_2-v_1),\ v_1+r(v_3-v_1)\bigr\},\\
Q_2(r)=\conv\!\bigl\{v_2,\ v_2+r(v_1-v_2),\ v_2+r(v_3-v_2)\bigr\},\\
Q_3(r)=\conv\!\bigl\{v_3,\ v_3+r(v_1-v_3),\ v_3+r(v_2-v_3)\bigr\}.
\end{gather*}
Each \(Q_i(r)\) is an equilateral triangle of side length \(r\) with the same corner vertex as \(T_i\), and \(Q_i(r)\) contains \(T_i\) because \(r\ge 1/2\).
Figure~\ref{fig:normalized} shows the geometry.

\begin{figure}[t]
\centering
\begin{tikzpicture}[scale=6.0, every node/.style={font=\small}]
  \coordinate (v1) at (0,0);
  \coordinate (v2) at (1,0);
  \coordinate (v3) at (0.5,{sqrt(3)/2});
  \coordinate (m12) at ($(v1)!0.5!(v2)$);
  \coordinate (m13) at ($(v1)!0.5!(v3)$);

  \def\rad{0.78}
  \coordinate (q2) at ($(v1)!\rad!(v2)$);
  \coordinate (q3) at ($(v1)!\rad!(v3)$);
  \coordinate (x) at (0.24,0.15);

  \fill[gray!8] (v1)--(v2)--(v3)--cycle;
  \fill[blue!14] (v1)--(m12)--(m13)--cycle;

  \draw[gray!65] (v1)--(v2)--(v3)--cycle;
  \draw[blue!70!black, very thick] (v1)--(q2)--(q3)--cycle;
  \draw[blue!70!black, thick] (v1)--(m12)--(m13)--cycle;
  \draw[red!75!black, dashed, thick] (x) circle (\rad);

  \fill[red!75!black] (x) circle (0.009) node[above right=1pt] {$x$};
  \fill (v1) circle (0.006) node[below left] {$v_1$};
  \fill (q2) circle (0.006);
  \fill (q3) circle (0.006);

  \node[blue!70!black] at (0.31,0.12) {$T_1$};
  \node[blue!70!black] at (0.42,0.39) {$Q_1(r)$};
  \node[red!75!black] at (0.80,0.54) {$B(x,r)$};
\end{tikzpicture}
\caption{The case \(i=1\): any point \(x\in T_1\) lies inside the corner triangle \(Q_1(r)\), and the three vertices of \(Q_1(r)\) lie in \(B(x,r)\).}
\label{fig:normalized}
\end{figure}

\begin{lemma}\label{lem:onescale}
Let \(x\in E\) and fix \(r\) with \(1/2\le r\le 1\).
Then \(\conv(E\cap B(x,r))\) contains an equilateral triangle of side length \(r\).
\end{lemma}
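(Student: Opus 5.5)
Since \(E=\phi_1(E)\cup\phi_2(E)\cup\phi_3(E)\subset T_1\cup T_2\cup T_3\), the point \(x\) lies in some \(T_i\). By the symmetry of \(\Delta\) (rotations and reflections permuting the \(v_j\) preserve \(E\) and Euclidean distances), we may assume \(i=1\), so \(x\in T_1\). The plan is to show that the corner triangle \(Q_1(r)\) itself lies in \(\conv(E\cap B(x,r))\), as Figure~\ref{fig:normalized} suggests. Since \(Q_1(r)\) is equilateral of side length \(r\), this gives the lemma.

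First, the three vertices \(v_1\), \(a=v_1+r(v_2-v_1)\) and \(b=v_1+r(v_3-v_1)\) of \(Q_1(r)\) lie in \(E\). The point \(v_1\) does, and \(a\in[v_1,v_2]\) and \(b\in[v_1,v_3]\) because \(r\le 1\); Lemma~\ref{lem:boundary} puts both segments inside \(E\). Second, all three vertices lie in \(B(x,r)\). The function \(y\mapsto|y-p|\) is convex, so its maximum over the convex set \(T_1\ni x\) is attained at a vertex of \(T_1\). Thus it suffices to check the distances from the vertices \(v_1\), \(m_{12}=(v_1+v_2)/2\) and \(m_{13}=(v_1+v_3)/2\) of \(T_1\) to each of \(v_1,a,b\). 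Every point among \(v_1,m_{12},m_{13},a,b\) lies in \(Q_1(r)\), because \(r\ge 1/2\), and \(Q_1(r)\) has diameter \(r\). Hence all nine distances are at most \(r\). In fact this gives the cleaner statement that \(x\in T_1\subset Q_1(r)\) and the three vertices lie in \(Q_1(r)\), so \(|x-v|\le\diam Q_1(r)=r\) for each vertex \(v\); no vertex computation is needed.

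It follows that \(\{v_1,a,b\}\subset E\cap B(x,r)\), and taking convex hulls gives \(Q_1(r)\subset\conv(E\cap B(x,r))\).

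The only delicate point is the reduction to \(x\in T_1\). We need \(E\subset T_1\cup T_2\cup T_3\), which follows from \(E\subset\Delta\) together with self-similarity. We also need the symmetry argument, or simply to repeat the same reasoning verbatim for \(i=2,3\). I expect no real obstacle here: the key observation is that \(T_i\subset Q_i(r)\) and that an equilateral triangle has diameter equal to its side length.
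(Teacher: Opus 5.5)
Your proof is correct and follows essentially the same route as the paper: place \(x\) in some \(T_i\subset Q_i(r)\), use Lemma~\ref{lem:boundary} to put the vertices of \(Q_i(r)\) in \(E\), and note that every point of an equilateral triangle of side \(r\) lies within \(r\) of each of its vertices. Your diameter formulation is the same fact the paper states via disks centered at the vertices, and your symmetry reduction to \(i=1\) is only a cosmetic difference, since the paper simply treats general \(i\).
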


\begin{proof}
Because
\[
E=\phi_1(E)\cup\phi_2(E)\cup\phi_3(E)\subset T_1\cup T_2\cup T_3,
\]
there is some \(i\in\{1,2,3\}\) with \(x\in T_i\).
The only overlaps between the closed triangles \(T_1,T_2,T_3\) are the three midpoints \(m_{12},m_{13},m_{23}\), so if \(x\) belongs to two cells, the choice of \(i\) is harmless.
Since \(T_i\subset Q_i(r)\), we have \(x\in Q_i(r)\).

By Lemma~\ref{lem:boundary}, the three vertices of \(Q_i(r)\) belong to \(E\), since each lies on one of the sides of the original triangle \(\Delta\).
So it remains to check that these three vertices lie in \(B(x,r)\).

In an equilateral triangle of side length \(r\), the closed disk of radius \(r\) centered at any vertex contains all three vertices; since the disk is convex, it contains the whole triangle.
Applying this to \(Q_i(r)\), we see that every point of \(Q_i(r)\)---and in particular, \(x\)---is within distance at most \(r\) of each vertex of \(Q_i(r)\).

Thus the three vertices of \(Q_i(r)\) belong to \(E\cap B(x,r)\).
Their convex hull is exactly \(Q_i(r)\), an equilateral triangle of side length \(r\).
Therefore
\[
Q_i(r)\subset \conv(E\cap B(x,r)),
\]
as claimed.
\end{proof}

Lemma~\ref{lem:onescale} is the heart of the argument, proving the desired local triangle statement in the normalized range \(1/2\le r\le 1\).
The next section uses self-similarity to transfer that normalized statement to all smaller radii.

\section{Scaling down and the exact constant}

The preceding lemma treats radii comparable to the diameter of the whole gasket.
For a smaller radius, we choose a construction scale whose cell side length is comparable to that radius, take a cell at that scale containing \(x\), pull that cell back to the original gasket, apply the normalized lemma, and then scale the resulting triangle forward again.
Figure~\ref{fig:rescaling} illustrates this rescaling step.

\begin{figure}[t]
\centering
\begin{tikzpicture}[scale=4.4, every node/.style={font=\small}]
  % The displayed example uses the lower-left level-2 cell, corresponding
  % to \phi_w(z)=z/4. Thus n=2, x'=\phi_w^{-1}(x)=4x, and r'=4r.
  \pgfmathsetmacro{\xcoord}{0.07}
  \pgfmathsetmacro{\ycoord}{0.04}
  \pgfmathsetmacro{\rr}{0.20}
  \pgfmathsetmacro{\scaleFactor}{4}
  \pgfmathsetmacro{\xpcoord}{\scaleFactor*\xcoord}
  \pgfmathsetmacro{\ypcoord}{\scaleFactor*\ycoord}
  \pgfmathsetmacro{\rprime}{\scaleFactor*\rr}

  \begin{scope}
    \coordinate (a1) at (0,0);
    \coordinate (a2) at (1,0);
    \coordinate (a3) at (0.5,{sqrt(3)/2});
    \coordinate (b1) at ($(a1)!0.5!(a2)$);
    \coordinate (b2) at ($(a2)!0.5!(a3)$);
    \coordinate (b3) at ($(a1)!0.5!(a3)$);

    \coordinate (c1) at (0,0);
    \coordinate (c2) at ($(a1)!0.25!(a2)$);
    \coordinate (c3) at ($(a1)!0.25!(a3)$);

    \coordinate (x) at (\xcoord,\ycoord);

    \fill[gray!8] (a1)--(a2)--(a3)--cycle;
    \draw[gray!60] (a1)--(a2)--(a3)--cycle;
    \draw[gray!45] (b1)--(b2)--(b3)--cycle;

    \fill[blue!14] (c1)--(c2)--(c3)--cycle;
    \draw[blue!65!black, very thick] (c1)--(c2)--(c3)--cycle;

    \draw[red!75!black, dashed, thick] (x) circle (\rr);

    \fill[red!75!black] (x) circle (0.011);
    \node[red!75!black, above right=0pt] at (x) {\(x\)};
    \node[red!75!black, below left=2pt] at (x) {\scriptsize \((0.07,0.04)\)};

    \node[blue!65!black] at (0.16,0.054) {\(\Delta_w\)};
    \node[red!75!black] at (0.31,0.27) {\(B(x,r)\)};
    \node at (0.5,-0.18) {chosen level-\(2\) triangle};
  \end{scope}

  \draw[->, very thick] (1.32,0.42) -- (1.95,0.42);
  \node at (1.635,0.54) {\(\phi_w^{-1}\)};

  \begin{scope}[shift={(2.35,0)}]
    \coordinate (v1) at (0,0);
    \coordinate (v2) at (1,0);
    \coordinate (v3) at (0.5,{sqrt(3)/2});
    \coordinate (m12) at ($(v1)!0.5!(v2)$);
    \coordinate (m23) at ($(v2)!0.5!(v3)$);
    \coordinate (m13) at ($(v1)!0.5!(v3)$);

    \coordinate (xprime) at (\xpcoord,\ypcoord);

    \fill[blue!10] (v1)--(v2)--(v3)--cycle;
    \draw[blue!65!black, very thick] (v1)--(v2)--(v3)--cycle;

    \draw[gray!45] (m12)--(m23)--(m13)--cycle;
    \draw[red!75!black, dashed, thick] (xprime) circle (\rprime);

    \fill[red!75!black] (xprime) circle (0.011);
    \node[red!75!black, above right=1pt] at (xprime) {\(x'\)};
    \node[red!75!black, below left=2pt] at (xprime) {\scriptsize \((0.28,0.16)\)};

    \node[red!75!black] at (0.82,0.60) {\(B(x',r')\)};
    \node at (0.5,-0.18) {normalized problem on \(E\)};
  \end{scope}
\end{tikzpicture}
\caption{The rescaling step, illustrated for the lower-left level-\(2\) cell, where \(\phi_w(z)=z/4\), so \(x'=\phi_w^{-1}(x)=4x\) and \(r'=4r\). The map \(\phi_w^{-1}\) carries the chosen cell \(\Delta_w\) onto the full normalized triangle \(\Delta\), so \(x'\) has the same relative position in \(\Delta\) that \(x\) has in \(\Delta_w\). In general, for \(|w|=n\), applying \(\phi_w^{-1}\) sends \(x\) to \(x'=\phi_w^{-1}(x)\) and rescales the radius to \(r'=2^n r\in[1/2,1]\), which is the range covered by Lemma~\ref{lem:onescale}.}
\label{fig:rescaling}
\end{figure}

\begin{lemma}\label{lem:allscales}
For every \(x\in E\) and every \(r\) with \(0<r\le 1\), the set \(\conv(E\cap B(x,r))\) contains an equilateral triangle of side length \(r\).
\end{lemma}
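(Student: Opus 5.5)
The plan is to reduce Lemma~\ref{lem:allscales} to Lemma~\ref{lem:onescale} by rescaling. Fix \(x\in E\) and \(0<r\le 1\). I would choose the unique integer \(n\ge 0\) with \(2^{-n-1}<r\le 2^{-n}\), so that \(r':=2^n r\in(1/2,1]\). I then need a word \(w\) with \(|w|=n\) and \(x\in\phi_w(E)\). This follows by iterating the identity \(E=\phi_1(E)\cup\phi_2(E)\cup\phi_3(E)\) a total of \(n\) times, which gives \(E=\bigcup_{|w|=n}\phi_w(E)\). With this choice I set \(x'=\phi_w^{-1}(x)\). Note that \(x'\in E\), because \(x=\phi_w(y)\) for some \(y\in E\) and \(\phi_w\) is injective.

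Next I would apply Lemma~\ref{lem:onescale} to \(x'\) and \(r'\). This produces an equilateral triangle \(Q\) of side length \(r'\) with \(Q\subset\conv(E\cap B(x',r'))\). Since \(\phi_w\) is a similarity with ratio \(2^{-n}\) (a homothety composed with translations), three facts follow:
\[
\phi_w\bigl(B(x',r')\bigr)=B(x,r),
\qquad
\phi_w(\conv S)=\conv\phi_w(S),
\qquad
\phi_w(Q)\ \text{is equilateral of side } 2^{-n}r'=r.
\]

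The final step is the inclusion of sets. Because \(\phi_w(E)\subset E\) (as noted in the proof of Lemma~\ref{lem:boundary}), we get
\[
\phi_w(Q)\subset\conv\bigl(\phi_w(E)\cap B(x,r)\bigr)\subset\conv\bigl(E\cap B(x,r)\bigr),
\]
and this proves the lemma.

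I do not expect a serious obstacle here. The one point that needs care is the direction of the inclusion: we only know \(\phi_w(E)\subset E\), not equality, and this is exactly the direction required. The other point is to check that \(\phi_w\) maps disks to disks with the radius scaled by \(2^{-n}\). This holds because each \(\phi_i\) is a homothety of ratio \(1/2\), so their composition is a homothety of ratio \(2^{-n}\). For the endpoint case \(r=2^{-n}\) we get \(r'=1\), which Lemma~\ref{lem:onescale} still covers. The case \(n=0\) is just Lemma~\ref{lem:onescale} itself.
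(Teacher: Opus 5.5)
Your proposal is correct and is essentially the paper's proof. You choose \(n\) with \(2^n r\in[1/2,1]\), pick a level-\(n\) word \(w\) with \(x\in\phi_w(E)\), apply Lemma~\ref{lem:onescale} at \(x'=\phi_w^{-1}(x)\) and \(r'=2^n r\), and push the triangle forward using that \(\phi_w\) is an affine similarity of ratio \(2^{-n}\) with \(\phi_w(E)\subset E\); the differences are cosmetic (a strict inequality to make \(n\) unique, and using \(\phi_w(B(x',r'))=B(x,r)\) directly where the paper checks \(\phi_w(E\cap B(x',r'))\subset E\cap B(x,r)\) by a distance computation).
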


\begin{proof}
Choose \(n\ge 0\) such that
\[
2^{-(n+1)}\le r\le 2^{-n},
\]
or equivalently
\[
\frac{1}{2}\le 2^n r\le 1.
\]

Iterating the identity \(E=\phi_1(E)\cup\phi_2(E)\cup\phi_3(E)\) gives
\[
E=\bigcup_{|w|=n}\phi_w(E).
\]
Choose a word \(w\) of length \(n\) such that \(x\in\phi_w(E)\), and write
\[
x=\phi_w(x')
\qquad\text{with \(x'\in E\)}.
\]
Set \(r'=2^n r\), so \(r'\in[1/2,1]\).

By Lemma~\ref{lem:onescale}, the set \(\conv(E\cap B(x',r'))\) contains an equilateral triangle \(Q'\) of side length \(r'\).
Applying \(\phi_w\), we obtain an equilateral triangle
\[
Q=\phi_w(Q')
\]
of side length \(2^{-n}r'=r\).

If \(p'\in E\cap B(x',r')\), then \(\phi_w(p')\in E\) and
\[
|\phi_w(p')-x|=2^{-n}|p'-x'|\le 2^{-n}r'=r,
\]
so \(\phi_w(p')\in E\cap B(x,r)\).
Hence
\[
\phi_w(E\cap B(x',r'))\subset E\cap B(x,r).
\]
Since \(\phi_w\) is affine,
\[
Q
\subset
\phi_w\bigl(\conv(E\cap B(x',r'))\bigr)
=
\conv\bigl(\phi_w(E\cap B(x',r'))\bigr)
\subset
\conv(E\cap B(x,r));
\]
this proves the claim.
\end{proof}

It remains to translate Lemma~\ref{lem:allscales} into the numerical value of \(\tau(E)\) and to show that the resulting constant is best possible.

\begin{proof}[Proof of Theorem~\ref{thm:main}]
Fix \(x\in E\) and \(r\) with \(0<r\le 1=\diam(E)\).
By Lemma~\ref{lem:allscales}, the set \(\conv(E\cap B(x,r))\) contains an equilateral triangle of side length \(r\).
The inscribed closed disk of such a triangle---that is, its incircle---has radius
\[
\frac{\sqrt{3}}{6}\,r.
\]

We thus know that for all \(x\in E\) and all \(r\) with \(0<r\le \diam(E)\), there exists \(y\in\R^2\) such that
\[
B\left(y,\frac{\sqrt{3}}{6}r\right)\subset \conv(E\cap B(x,r)).
\]
Recalling the definition of Feng--Wu thickness, this shows that
\[
\tau(E)\ge \frac{\sqrt{3}}{6}.
\]

For the reverse inequality, take \(x=v_1\) and \(r=1\).
Since \(E\subset\Delta\subset B(v_1,1)\), we have
\[
E\cap B(v_1,1)=E.
\]
Also \(\conv(E)=\Delta\).
Thus, for this particular choice of \(x\) and \(r\), the local convex hull in the definition of thickness is exactly the unit equilateral triangle \(\Delta\).

Meanwhile, no disk contained in \(\Delta\) can have radius larger than \(\sqrt{3}/6\).
To see this, let \(D\) be a disk of radius \(\rho\) contained in \(\Delta\), and let \(z\) be its center.
Since \(\Delta\) is convex, \(z\in\Delta\).
Let \(d_1,d_2,d_3\) be the perpendicular distances from \(z\) to the three sides of \(\Delta\).
Since \(D\subset\Delta\), we have
\[
\rho\le \min\{d_1,d_2,d_3\}.
\]
On the other hand, decomposing \(\Delta\) into the three triangles with common vertex \(z\) and bases the three sides of \(\Delta\), each of length \(1\), gives
\[
\area(\Delta)=\frac{1}{2}(d_1+d_2+d_3).
\]
Because \(\Delta\) has side length \(1\) and altitude \(\sqrt{3}/2\), also
\[
\area(\Delta)=\frac{1}{2}\cdot\frac{\sqrt{3}}{2}.
\]
Thus
\[
d_1+d_2+d_3=\frac{\sqrt{3}}{2},
\]
and hence
\[
\rho\le \min\{d_1,d_2,d_3\}\le \frac{\sqrt{3}}{6};
\]
equality is attained by the incircle.
Therefore, for \(x=v_1\) and \(r=1\), the defining condition for \(\tau(\cdot)\) fails for every \(c>\sqrt{3}/6\).
Hence
\[
\tau(E)\le \frac{\sqrt{3}}{6}.
\]

Combining the two inequalities gives
\[
\tau(E)=\frac{\sqrt{3}}{6}.\qedhere
\]
\end{proof}

\begin{remark}
The upper bound is already apparent at the largest possible scale and at a corner point: when \(x=v_1\) and \(r=1\), the local convex hull is simply the ambient triangle \(\Delta\).
So the exact thickness is forced by the most intuitively visible layer of the geometry.
\end{remark}

\section{A sumset remark}

One reason to compute Feng--Wu thickness explicitly is that it gives quantitative criteria for when sums of thin sets acquire interior.

Recall that the Minkowski sum of sets \(A,B\subset\R^2\) is
\[
A+B=\{a+b:a\in A,\ b\in B\}.
\]
A recent theorem of~\cite{Kominers}, sharpening a result of~\cite{FengWu}, states that if compact sets \(F_1,\dots,F_n\subset\R^d\) have positive diameter and common Feng--Wu thickness at least \(c>0\), then
\[
F_1+\cdots+F_n
\]
has nonempty interior whenever
\[
n>\frac{\sqrt d}{(\sqrt{1+c}-1)^2}.
\]
Thus Theorem~\ref{thm:main} immediately gives an explicit many-summand interior result.
This consequence is not intended to be sharp for the Sierpi\'nski triangle; rather, it illustrates how the local convex-geometric invariant enters general sumset theorems.
Applying the theorem with \(F_1=\cdots=F_n=E\), \(d=2\), and \(c=\sqrt{3}/6\), we obtain the following.

\begin{corollary}\label{cor:manysum}
If
\[
n>\frac{\sqrt2}{\bigl(\sqrt{1+\sqrt{3}/6}-1\bigr)^2}\approx 77.37,
\]
then the \(n\)-fold sumset
\[
nE=\underbrace{E+\cdots+E}_{\text{\(n\) times}}
\]
has nonempty interior.
In particular, \(nE\) has nonempty interior for every integer \(n\ge 78\).
\end{corollary}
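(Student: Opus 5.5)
The corollary follows by plugging Theorem~\ref{thm:main} into the quoted sumset theorem of~\cite{Kominers}, so the plan is to check its hypotheses and do the arithmetic. We take \(d=2\) and \(F_1=\cdots=F_n=E\). Each \(F_j\) is compact, since \(E\) is the attractor of the iterated function system. Each has positive diameter, because we showed \(\diam(E)=1\) after Lemma~\ref{lem:boundary}. Each has Feng--Wu thickness at least \(c=\sqrt3/6>0\) by Theorem~\ref{thm:main}. The theorem then says \(nE\) has nonempty interior whenever
\[
n>\frac{\sqrt2}{\bigl(\sqrt{1+\sqrt3/6}-1\bigr)^2}.
\]
This gives the first assertion directly.

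For the numerical claim, we evaluate the threshold.
\begin{itemize}
\item Compute \(\sqrt3/6\approx 0.288675\), so \(1+c\approx 1.288675\).
\item Take the square root: \(\sqrt{1.288675}\approx 1.135198\), so \(\sqrt{1+c}-1\approx 0.135198\).
\item Square: \((0.135198)^2\approx 0.018278\).
\item Divide: \(\sqrt2/0.018278\approx 1.414214/0.018278\approx 77.37\).
\end{itemize}
Every integer \(n\ge 78\) therefore exceeds the threshold, and the theorem applies.

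The only real obstacle is making the final step rigorous: we need \(78\) to beat the threshold with certainty, not just to be approximately larger than it. To do this with exact inequalities, we bound \(\sqrt{1+c}\) from below by a rational number \(s\). Then \((s-1)^2\) bounds \((\sqrt{1+c}-1)^2\) from below, so \(\sqrt2/(s-1)^2\) bounds the threshold from above. It then suffices to check that this upper bound is less than \(78\).

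A workable choice is \(s=1.135\). We need two exact checks:
\begin{itemize}
\item \(s^2=1.288225\), which is less than \(1+c\) because \(\sqrt3/6>0.2886\).
\item With \(s-1=0.135\), we have \(0.135^2=0.018225\), and \(78\cdot 0.018225=1.42155>1.41422>\sqrt2\).
\end{itemize}
Together these give \(\sqrt2/(\sqrt{1+c}-1)^2<78\). Hence every integer \(n\ge 78\) satisfies the hypothesis, which completes the corollary.
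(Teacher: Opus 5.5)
Your proposal is correct and follows the paper's argument: the corollary is a direct application of the quoted sumset theorem with $F_1=\cdots=F_n=E$, $d=2$, and $c=\sqrt{3}/6$ from Theorem~\ref{thm:main}. Your exact check is a rigorous addition the paper omits, since the paper only records $\approx 77.37$; it uses $1.135<\sqrt{1+\sqrt{3}/6}$ to conclude $\sqrt{2}/(\sqrt{1+\sqrt{3}/6}-1)^2<\sqrt{2}/(0.135)^2<78$.
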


Corollary~\ref{cor:manysum} is a black-box consequence of the general theorem, and it is not sharp for the Sierpi\'nski gasket.
The special boundary geometry of \(E\) gives interior much sooner: already the two-fold sumset has interior.

\begin{corollary}\label{cor:twosum}
The sumset \(E+E\) has nonempty interior.
\end{corollary}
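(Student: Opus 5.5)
The plan is to use Lemma~\ref{lem:boundary}: \(E\) contains two non-parallel line segments, and the sum of two non-parallel segments is a parallelogram, which has nonempty interior. So \(E+E\) contains a full parallelogram and we are done.

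Concretely, by Lemma~\ref{lem:boundary} the sides \([v_1,v_2]\) and \([v_1,v_3]\) of \(\Delta\) both lie in \(E\). Hence
\[
[v_1,v_2]+[v_1,v_3]\subset E+E.
\]

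Next I compute this Minkowski sum. Every point of it has the form
\[
(1-s)v_1+s v_2+(1-t)v_1+t v_3 = 2v_1+s(v_2-v_1)+t(v_3-v_1), \qquad s,t\in[0,1].
\]
Thus the sum is the closed parallelogram with vertex \(2v_1=0\) spanned by the vectors \(v_2-v_1=(1,0)\) and \(v_3-v_1=(1/2,\sqrt3/2)\).

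These two vectors are linearly independent, since the determinant of the matrix they form is \(\sqrt3/2\neq0\). The map \((s,t)\mapsto s(v_2-v_1)+t(v_3-v_1)\) is therefore an invertible linear map of \(\R^2\). It sends the open square \((0,1)^2\) onto an open set, and that open set is contained in the parallelogram. Hence \(E+E\) has nonempty interior.

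There is essentially no obstacle here. The only point to state carefully is the linear-independence step, which ensures the image of the open square is genuinely open. One could also avoid that step by naming an explicit disk: the parallelogram contains the equilateral triangle \(\Delta\) itself (the points with \(s+t\le1\)), and hence contains its incircle.
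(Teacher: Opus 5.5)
Your proof is correct and follows the paper's argument essentially verbatim: both take the sides $[v_1,v_2]$ and $[v_1,v_3]$ from Lemma~\ref{lem:boundary} and observe that their Minkowski sum is a nondegenerate parallelogram contained in $E+E$. Your extra remark is also correct: since $v_1=0$, the part of the parallelogram with $s+t\le 1$ is $\Delta$ itself, which gives an explicit disk.
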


Figure~\ref{fig:twosum} illustrates the geometry behind the argument: two surviving boundary segments already generate a two-dimensional parallelogram inside \(E+E\).

\begin{figure}[t]
\centering
\begin{tikzpicture}[
    scale=3.2,
    every node/.style={font=\small},
    point/.style={circle, fill=black, inner sep=1.15pt},
    segblue/.style={blue!70!black, line width=1.25pt},
    segred/.style={red!70!black, line width=1.25pt},
    sumgreen/.style={green!35!black, line width=0.8pt},
    >=Stealth
  ]

  \begin{scope}
    \coordinate (v1) at (0,0);
    \coordinate (v2) at (1,0);
    \coordinate (v3) at (0.5,{sqrt(3)/2});

    \fill[gray!4] (v1)--(v2)--(v3)--cycle;
    \draw[gray!45, line width=0.45pt] (v1)--(v2)--(v3)--cycle;

    \draw[segblue] (v1)--(v2);
    \draw[segred]  (v1)--(v3);

    \node[point] at (v1) {};
    \node[point] at (v2) {};
    \node[point] at (v3) {};

    \node[below left=1pt]  at (v1) {$v_1$};
    \node[below right=1pt] at (v2) {$v_2$};
    \node[above=2pt]       at (v3) {$v_3$};

    \node[blue!70!black, below=7pt] at ($(v1)!0.5!(v2)$)
      {$[v_1,v_2]$};
    \node[red!70!black, rotate=60, above=5pt] at ($(v1)!0.54!(v3)$)
      {$[v_1,v_3]$};

    \node[font=\small] at (0.5,-0.38) {two segments in \(E\)};
  \end{scope}

  \draw[->, line width=0.55pt] (1.42,0.42) -- (2.02,0.42);
  \node[font=\normalsize] at (1.72,0.63) {\(+\)};

  \begin{scope}[shift={(2.42,0)}]
    \coordinate (p11) at (0,0);
    \coordinate (p12) at (1,0);
    \coordinate (p13) at (0.5,{sqrt(3)/2});
    \coordinate (p23) at (1.5,{sqrt(3)/2});

    \fill[green!16] (p11)--(p12)--(p23)--(p13)--cycle;
    \draw[sumgreen] (p11)--(p12)--(p23)--(p13)--cycle;

    \node[point] at (p11) {};
    \node[point] at (p12) {};
    \node[point] at (p13) {};
    \node[point] at (p23) {};

    \node[below left=1pt]  at (p11) {$2v_1$};
    \node[below right=1pt] at (p12) {$v_1+v_2$};
    \node[above left=1pt]  at (p13) {$v_1+v_3$};
    \node[above right=1pt] at (p23) {$v_2+v_3$};

    \node[green!35!black] at (0.75,{sqrt(3)/3})
      {$[v_1,v_2]+[v_1,v_3]$};

    \node[font=\small] at (0.75,-0.38) {a parallelogram in \(E+E\)};
  \end{scope}
\end{tikzpicture}
\caption{The Minkowski sum of two segments contained in \(E\) is a parallelogram contained in \(E+E\).}
\label{fig:twosum}
\end{figure}

\begin{proof}[Proof of Corollary~\ref{cor:twosum}]
By Lemma~\ref{lem:boundary}, the sides \([v_1,v_2]\) and \([v_1,v_3]\) are contained in \(E\).
Hence
\[
[v_1,v_2]+[v_1,v_3]\subset E+E.
\]
But
\[
[v_1,v_2]+[v_1,v_3]
=
2v_1+\{\,s(v_2-v_1)+t(v_3-v_1):0\le s,t\le 1\,\},
\]
which is a parallelogram with nonempty interior, since \(v_2-v_1\) and \(v_3-v_1\) are not parallel.
Therefore \(E+E\) has nonempty interior.
\end{proof}

Thus the thickness computation fits neatly into the Feng--Wu framework: a local convex-geometric invariant yields an explicit interior statement for arithmetic sums.
In this case, however, Corollary~\ref{cor:twosum} shows that the general bound is far from sharp: the boundary geometry of the gasket is already strong enough to force interior in \(E+E\).

\providecommand{\bysame}{\leavevmode\hbox to3em{\hrulefill}\thinspace}
\providecommand{\MR}{\relax\ifhmode\unskip\space\fi MR }
% \MRhref is called by the amsart/book/proc definition of \MR.
\providecommand{\MRhref}[2]{%
  \href{http://www.ams.org/mathscinet-getitem?mr=#1}{#2}
}
\providecommand{\href}[2]{#2}

\end{document}